\documentclass{article}

\usepackage[12pt]{extsizes}
\evensidemargin=-1cm \oddsidemargin=-1cm \textwidth=18cm
\topmargin=-2.5cm 
\textheight=25cm

\usepackage{amsfonts}
\usepackage{euscript}
\usepackage{amsfonts,amssymb,amsmath,amsbsy,amsthm,amscd}
\usepackage[T2A]{fontenc}
\usepackage[cp1251]{inputenc}
\usepackage[english]{babel} 

\theoremstyle{plain}
\newtheorem{theorem}{Proposition}
\newtheorem{corollary}{Corollary}
\newtheorem{thh}{Theorem}
\theoremstyle{definition}
\newtheorem{remark}{Remark}

\DeclareMathOperator{\rk}{rk}

\title{ Decomposing a Matrix into two Submatrices with Extremely Small Operator Norm}
\author{ I.V.Limonova\footnote{
e-mail: limonova\_irina@rambler.ru.
 }}
\date{Lomonosov Moscow State University}
 
\begin{document}
\maketitle
\begin{abstract}
We give sufficient conditions on a matrix A ensuring the existence of a partition of this matrix into two submatrices with extremely small norm of the image of any vector. Under some weak conditions on a matrix A we obtain a partition of A with the extremely small $(1,q)$--norm of submatrices.
\end{abstract}

\small{Keywords: {\it submatrix, operator norm, partition of a matrix, Lunin's method}}
\\

\normalsize
This paper is devoted to the estimates of operator norms of submatrices. The subject is actively being developed and finds various applications. The present work can be viewed as a continuation of \cite{1} discussing the $(2,1)$--norm case. This case was studied earlier for matrices with orthonormal columns in \cite{2}, where an analogue of the partition (\ref{partition}) (see below) with the extremely small $(2,1)$--norm of the corresponding submatrices was obtained.  Using the modified Lunin's method we prove an essential reinforcement of Assertion $4$ from \cite{1} and the generalization of Assertion $3$ to the case of the $(X, q)$--norm with $1\leq q<\infty$. We study the case of the $(1,q)$--norm in greater detail.

For an $N\times n$ matrix $A$, viewed as an operator from $l_p^n$ to $l_q^N$, we define the $(p, q)$--norm:
\begin{equation*}
  \left\| A\right\|_{(p,q)}=\sup\limits_{\left\| x \right\|_{l_p^n}\leq 1}\left\| Ax\right\| _{l_q^N}, \ 1\leq p, q\leq \infty.
\end{equation*}
In fact, Proposition \ref{assertion_norm} is proved here for a more general $(X,q)$--norm where $X$ is an $n$--dimensional norm space.

We use the following notation: $\rk(A)$ is the rank of a matrix $A$, $\left< N\right>$ is the set of natural numbers $1, 2,\ldots, N$; $v_i$, $i\in \left< N\right>$ stands for the rows of $A$,  $w_j$, $j\in\left<n\right>$ --- its columns. For a subset $\omega\subset\left<N\right>$  $A(\omega)$ denotes the submatrix of a matrix $A$ formed by the rows $v_i, i\in\omega$, $\overline{\omega}=\left<N\right>\setminus\omega$ .
 $( \cdot, \cdot)$ stands for the inner product in $\mathbb R ^n$,  $\left\| x\right\|_p$ is the norm of $x\in\mathbb{R}^n$ in $l_p^n$, $1\leq p\leq\infty$. For a norm space $X$ $\left\|\cdot\right\|_X$ is the norm on X.

The following condition is the counterpart of the condition on a matrix from \cite{1} in the case of an arbitrary $1\leq q<\infty$: 
\begin{equation}\label{cond}
 \forall x\in\mathbb R^n\ \  \forall i_0\in\left<N\right> \ \ |(v_{i_0}, x)|\leq\varepsilon\left(\sum_{i=1}^N|(v_i, x)|^q\right)^{1/q}.
\end{equation}

\begin{theorem}\label{assertion_pointwise}
Assume that an $N\times n$ matrix $A$ stisfies (\ref{cond})  with $0<\varepsilon\leq (\rk(A))^{-1/q}$ and $1\leq q<\infty$. Then there exists a partition 
\begin{equation}\label{partition}
\left<N\right>=\Omega_1\cup\Omega_2,\  \Omega_1\cap\Omega_2=\emptyset, 
\end{equation}
such that 
\begin{equation}\label{obtained_estimate}
  \left\| A(\Omega_k)x\right\|_q\leq \gamma\left\| Ax\right\|_q, \ \ \gamma=\frac{1}{2^{1/q}}+
  \frac{2+3\cdot 2^{-1/q}}{q}\left(\rk(A)\varepsilon^q\ln \frac{6q}{(\rk(A)\varepsilon^q)^{1/3}}\right)^{1/3}
  \end{equation}
  for any $x\in\mathbb{R}^n$ and $k=1,2$.
\end{theorem}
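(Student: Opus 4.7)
The plan is to follow Lunin's scheme with random signs: draw i.i.d.\ $\xi_i\in\{\pm 1\}$, set $\Omega_1=\{i:\xi_i=1\}$, $\Omega_2=\{i:\xi_i=-1\}$, and use the identity
\[
\|A(\Omega_k)x\|_q^q=\tfrac12\|Ax\|_q^q\pm\tfrac12 Z(x),\qquad Z(x):=\sum_{i=1}^N\xi_i|(v_i,x)|^q.
\]
The theorem follows if I can exhibit a realization of $(\xi_i)$ for which $\sup_{x\neq 0}|Z(x)|/\|Ax\|_q^q$ is sufficiently small, since taking $q$-th roots and applying the elementary inequality $(1+s)^{1/q}\le 1+s/q$ (valid for $q\ge 1$, $s\ge 0$) converts such a bound into one of the form $\|A(\Omega_k)x\|_q\le\gamma\|Ax\|_q$ with $\gamma=2^{-1/q}+O(1/q)$.

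For fixed $x$, $Z(x)$ is a bounded Rademacher sum: condition~(\ref{cond}) gives the uniform bound $|(v_i,x)|^q\le\varepsilon^q\|Ax\|_q^q$ on each summand, and hence $\sum_i|(v_i,x)|^{2q}\le\varepsilon^q\|Ax\|_q^{2q}$. Bernstein's inequality therefore yields the tail
\[
\Pr\bigl(|Z(x)|>t\|Ax\|_q^q\bigr)\le 2\exp\Bigl(-\frac{t^2/2}{\varepsilon^q(1+t/3)}\Bigr).
\]
Since $Z$ depends on $x$ only through $Ax$, which lies in the $r$-dimensional subspace $\mathrm{Im}(A)$, $r=\rk(A)$, I would take an $\varepsilon'$-net $\mathcal N$ in the unit ball of $(\mathrm{Im}(A),\|\cdot\|_q)$ of cardinality at most $(3/\varepsilon')^r$, union-bound the above tail over $\mathcal N$, and then transfer to arbitrary $x$ by choosing $x_0\in\mathcal N$ with $\|A(x-x_0)\|_q\le\varepsilon'$ and controlling $|Z(x)-Z(x_0)|\le\sum_i\bigl||(v_i,x)|^q-|(v_i,x_0)|^q\bigr|$. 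The mean-value inequality $|a^q-b^q|\le q\max(|a|,|b|)^{q-1}|a-b|$ combined with H\"older's inequality (and the bound $|(v_i,y)|\le\varepsilon\|Ay\|_q$ on the net scale) should produce a deterministic transfer error of order $q\varepsilon'$.

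Assembling the three ingredients yields
\[
\|A(\Omega_k)x\|_q\le 2^{-1/q}\bigl(1+t+2q\varepsilon'\bigr)^{1/q}\|Ax\|_q,
\]
where the Bernstein threshold imposes $t^2\gtrsim\delta(1+t/3)\ln(1/\varepsilon')$ with $\delta:=\rk(A)\varepsilon^q$. The hypothesis $\varepsilon\le(\rk A)^{-1/q}$ ensures $\delta\le 1$, so the linearization of $(1+s)^{1/q}$ is justified and the logarithm $\ln(6q/\delta^{1/3})$ appearing in $\gamma$ is positive. I expect the main obstacle to lie in this final optimization: a naive sub-Gaussian balance between the Bernstein tail and the $\varepsilon'$-net produces exponent $1/2$, whereas the statement asks for $1/3$, so one has to play the sub-Gaussian and sub-exponential regimes of Bernstein against the transfer error in a three-way balance, choose $\varepsilon'$ of order $\delta^{1/3}/q$, and track constants tightly enough to recover the precise prefactor $(2+3\cdot 2^{-1/q})/q$ and log factor $\ln(6q/\delta^{1/3})$.
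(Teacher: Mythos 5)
Your plan is essentially the paper's own argument in probabilistic dress: the paper also takes a net of scale $(\rk(A)\varepsilon^q)^{1/3}/q$ on the unit sphere of the norm $\|Ax\|_q$, uses condition (\ref{cond}) to get a Hoeffding/Bernstein-type control of $\sum_{i\in\omega}|(v_i,x)|^q$ for each fixed net vector (counting bad subsets and pigeonholing instead of your random signs and union bound, which is the same estimate), and transfers to nearby vectors exactly as you do, handling $\rk(A)<n$ by an equivalent reduction. One remark: the ``three-way balance'' you anticipate as the main obstacle is not actually needed, since in the nontrivial regime $\gamma<1$ one automatically has $\rk(A)\varepsilon^q\ln\bigl(6q/(\rk(A)\varepsilon^q)^{1/3}\bigr)\le 1$, so a purely sub-Gaussian threshold $t\le 3\bigl(\rk(A)\varepsilon^q\ln(6q/(\rk(A)\varepsilon^q)^{1/3})\bigr)^{1/3}$ already clears the union bound over the net of scale $\varepsilon'=(\rk(A)\varepsilon^q)^{1/3}/q$, and together with the transfer error $2q\varepsilon'$ this reproduces the stated constant $(2+3\cdot 2^{-1/q})/q$.
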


\begin{remark}
No one knows whether such a partition exists or not if $1<\rk(A)\varepsilon^q$. 
\end{remark}

\begin{proof}[Sketch of proof]
First, we prove Proposition \ref{assertion_pointwise} for the case of $\rk(A)=n$.
Denote $$\delta=\frac{(n\varepsilon^q)^{1/3}}{q}.$$
Let $X$ be the space $\mathbb{R}^n$ with the norm $\left\|x\right\|_X=\left\|Ax\right\|_q$ (it is a norm on $\mathbb{R}^n$ because $\rk(A)=n$). Let $S_X=\{x\in \mathbb{R}^n: \left\|x\right\|_X=1 \}$ be the unit sphere of $X$.
Let $\mathbb{Y}$ be a $\delta$--net in the norm $\left\|\cdot\right\|_X$ on the sphere $S_X$ with at most $\leq(3/\delta)^n$ elements. Suppose that it is not the case, then for every partition (\ref{partition})  there exists a vector
$x_1\in S_X$ such that
$$
\left\| A(\Omega_1)x_1\right\|_q>\gamma\left\| Ax_1\right\|_q
$$
(in this case let $\omega'=\Omega_1$, $x_{\omega'}=x_1$ ), or there exists a vector $x_2\in S_X$ such that
$$
\left\| A(\Omega_2)x_2\right\|_q>\gamma\left\| Ax_2\right\|_q
$$
(then we define $\omega'=\Omega_2$, $x_{\omega'}=x_2$ ). 
For every pair $(\Omega_1, \Omega_2)$ we find $\omega'$ and $x_{\omega'}$.
 Let $y_{\omega'}$ be one of the nearest to $x_{\omega'}$ vectors from the net $\mathbb{Y}$.
There are $2^{N-1}-1$ different partitions of the set $\left<N\right>$ into two nonempty parts. Therefore there exists a vector $y_0\in \mathbb{Y}$ such that the set $K=\{\omega' : y_0=y_{\omega'}\}$ is large enough:
\begin{equation}\label{K_est}
  |K|\geq(2^{N-1}-1)\left(\frac{\delta}{3}\right)^n \geq 2^N\left(\frac{\delta}{6}\right)^n. 
\end{equation}
(Here we assume that $n>1$, otherwise Proposition \ref{assertion_pointwise} is obvious.)
Therefore there is a vector $y_0\in S_X$ and at least 
$2^N(\delta/6)^n$ subsets $\omega'\subset\langle N\rangle$ for which $\left\|A(\omega')x_{\omega'}\right\|_q>\gamma \left\|Ax_{\omega'}\right\|_q$ and $\left\|y_0-x_{\omega'}\right\|_X<\delta$. 
Note that for $x\in S_X$ and $\omega\subset\langle N\rangle$ $\left\|A(\omega)x\right\|_q\leq \left\|A(\omega)\right\|_{(X,q)}\leq \left\|A\right\|_{(X,q)}$.

Below we assume that $\gamma<1$, otherwise (\ref{obtained_estimate}) is obviously true. As $\gamma<1$, for $\omega'\in K$ we obtain:
\begin{gather*}
\left\| A(\omega')y_0\right\| _q\geq \left\| A(\omega')x_{\omega'}\right\|  _q-\left\| A(\omega')(x_{\omega'}-y_0)\right\| _q>\\
>\gamma\left\| Ax_{\omega'}\right\| _q-\delta
\left\| A(\omega')\left\{\frac{x_{\omega'}-y_0}{\left\|x_{\omega'}-y_0\right\|_X}\right\}\right\|_q\geq
(\gamma\left\| Ay_0\right\| _q-\gamma\left\| A(x_{\omega'}-y_0)\right\|  _q)-
\end{gather*}
\begin{equation*}\label{24}
-\delta \left\| A\left(\frac{x_{\omega'}-y_0}{\left\|x_{\omega'}-y_0\right\|_X}\right)\right\| _q\geq \gamma\left\| Ay_0\right\| _q-2\delta \geq \gamma\left\| Ay_0\right\|_q-2\delta \left\| Ay_0\right\|_q=
\left\| Ay_0\right\|_q\left(\gamma-2\delta\right).
\end{equation*}

Since $y_0\in S_X$, we have used $\left\|Ay_0\right\|_q=\left\|y_0\right\|_X=1$ in the last inequality.
Let $R$ be an amount of subsets $\omega\subset\langle N\rangle$ for which
$$
\left\| A(\omega)y_0\right\|_q\geq \left(\gamma-2\delta\right)\left\| Ay_0\right\|_q
$$
holds. Let $K_1$ be the set of such subsets. Let us show that  $R<2^N(\delta/6)^n$, then we will come to the contradiction, and it will complete the proof of Proposition \ref{assertion_pointwise} in the case of  $\rk(A)=n$. Denote $M=3\cdot 2^{-1/q}$.
Since $\delta\leq\phi(n, \varepsilon)$, then for $\omega'\in K_1$ we have:
$$
\sum\limits_{i\in \omega'}{|(v_i,y_0)|^q}>(\gamma-2\delta)^q S> \left(\frac{1}{2^{1/q}}+M\phi(n, \varepsilon)\right)^q S\geq 
$$
$$
\geq
\left(\frac{1}{2}+q\frac{1}{2^{(q-1)/q}}M\phi(n, \varepsilon)\right)S=\left(\frac{1}{2}+q\frac{2^{1/q}}{2}M\phi(n, \varepsilon)\right)S.
$$
 $R$ can be estimated as in the proof of Assertion $3$ from \cite{1}.
 
Now, let a matrix have the rank $r<n$. Without loss of generality, we can assume that the vectors $w_1, \dots, w_{r}$ are linearly independent. 
It is clear that (\ref{cond}) holds for the matrix $\tilde{A}$, which consists from the first $r$ columns of $A$. We have $\rk{\tilde{A}}=r$, therefore there exists a partition of the form (\ref{partition}) such that (\ref{obtained_estimate}) holds. Let $w_j=\sum\limits_{i=1}^r \lambda_j^i w_i$. For a vector $x\in\mathbb{R}^n$ we construct the vector $\tilde{x}\in\mathbb{R}^r$ having coordinates $\tilde{x}_i=x_i+\sum\limits_{j=r+1}^n \lambda_j^ix_j$, then $Ax=\tilde{A}\tilde{x}$ and for $k=1,2$  $A(\Omega_k)x=\tilde{A}(\Omega_k)\tilde{x}$, so for the partition we have found (\ref{obtained_estimate}) also holds for the matrix $A$.
\end{proof}
\medskip

\begin{corollary}\label{assertion_pointwise}
Assume that an $N\times n$ matrix $A$ satisfies (\ref{cond}) with $0<\varepsilon\leq (\rk(A))^{-1/q}$ and $1\leq q<\infty$. Then there exists a partition (\ref{partition})
such that for any $x\in\mathbb{R}^n$ and $k=1,2$ we have
\begin{equation*}
    \left(\frac{1}{2}-\psi\right)\sum_{i\in\Omega_k}|(v_i, x)|^q\leq \sum_{i=1}^N|(v_i, x)|^q\leq \left(\frac{1}{2}+\psi\right)\sum_{i\in\Omega_k}|(v_i, x)|^q, 
    \end{equation*}
    where 
    $$
\psi=2^{q+1}
\left(\rk(A)\varepsilon^q\ln \frac{6q}{(\rk(A)\varepsilon^q)^{1/3}}\right)^{1/3}.
    $$
\end{corollary}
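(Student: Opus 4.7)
The plan is to deduce the corollary from Proposition~\ref{assertion_pointwise} by raising the operator-norm bound to the $q$-th power. Take the partition $\Omega_1,\Omega_2$ produced by that proposition and set $S_k(x):=\sum_{i\in\Omega_k}|(v_i,x)|^q$, $S(x):=\sum_{i=1}^N|(v_i,x)|^q$. The bound $\|A(\Omega_k)x\|_q\le\gamma\|Ax\|_q$ gives $S_k\le\gamma^q S$, and combining this with the identity $S_1+S_2=S$ applied to the complementary index yields $S_k=S-S_{3-k}\ge(1-\gamma^q)S$. Thus both inequalities of the corollary will follow at once from the single numerical bound
$$\gamma^q\le\tfrac12+\psi.$$

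The whole work lies in verifying this inequality. Writing $\tau:=(\rk(A)\varepsilon^q\ln(6q/(\rk(A)\varepsilon^q)^{1/3}))^{1/3}$, so that $\psi=2^{q+1}\tau$, I would factor out the leading term and set $t:=2^{1/q}(\gamma-2^{-1/q})=(2^{1+1/q}+3)\tau/q$; then $\gamma^q=\tfrac12(1+t)^q$. First dispose of the degenerate case $\gamma\ge 1$: this forces $\tau\ge q(1-2^{-1/q})/(2+3\cdot 2^{-1/q})$, and one checks the elementary inequality $2^{q+2}q(1-2^{-1/q})\ge 2+3\cdot 2^{-1/q}$ for $q\ge 1$ (immediate at $q=1$, and the left side grows exponentially while the right is bounded by $5$), concluding $\psi\ge 1/2$, in which case the corollary holds trivially for any partition. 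One may therefore assume $\gamma<1$, and then $t<2^{1/q}-1\le 1$ for every $q\ge 1$.

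For $t\in[0,1]$ and $q\ge 1$ the monotonicity of $(1+s)^{q-1}$ gives the convexity estimate
$$(1+t)^q-1=q\int_0^t(1+s)^{q-1}\,ds\le qt(1+t)^{q-1}\le qt\cdot 2^{q-1},$$
so that $\gamma^q-\tfrac12\le 2^{q-2}qt=2^{q-2}(2^{1+1/q}+3)\tau$. The remaining claim $\gamma^q\le\tfrac12+2^{q+1}\tau$ then reduces to the purely numerical inequality $2^{1+1/q}+3\le 8$, i.e.\ $2^{1/q}\le 5/2$, which holds for every $q\ge 1$. Apart from this bookkeeping, the argument is routine and presents no real obstacle; the coefficient $2^{q+1}$ in the definition of $\psi$ is chosen precisely so that the exponential factor $2^{q-1}$ lost in the convexity step is absorbed.
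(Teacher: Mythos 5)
Your argument is correct and is precisely the derivation the paper intends: the corollary is stated there without proof as an immediate consequence of Proposition~1, namely raising $\left\| A(\Omega_k)x\right\|_q\leq\gamma\left\| Ax\right\|_q$ to the $q$-th power, using that the two sums over $\Omega_1$ and $\Omega_2$ add up to the full sum, and verifying the numerical bound $\gamma^q\leq\tfrac12+\psi$, which your convexity estimate $(1+t)^q\leq 1+qt2^{q-1}$ together with the degenerate-case check ($\gamma\geq1\Rightarrow\psi\geq\tfrac12$) establishes. The one caveat is that what you prove --- and what must be meant --- is $\bigl(\tfrac12-\psi\bigr)\sum_{i=1}^N|(v_i,x)|^q\leq\sum_{i\in\Omega_k}|(v_i,x)|^q\leq\bigl(\tfrac12+\psi\bigr)\sum_{i=1}^N|(v_i,x)|^q$; the printed statement, with the two sums interchanged, is evidently a typo, since $\sum_{i\in\Omega_k}|(v_i,x)|^q\leq\sum_{i=1}^N|(v_i,x)|^q$ always holds and would contradict it whenever $\psi<\tfrac12$.
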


The following proposition is a simple corollary of Proposition \ref{assertion_pointwise}.

\begin{theorem}\label{assertion_norm} Let for an $N\times n$ matrix $A$ (\ref{cond}) hold for some $0<\varepsilon\leq (\rk(A))^{-1/q}$ and $1\leq q<\infty$.
Then there exists a partition (\ref{partition}) such that for  $k=1,2$ the following inequality holds
$$
\left\| A(\Omega_k)\right\|_{(X,q)}\leq \gamma \left\| A\right\|_{(X,q)},
$$
where $\gamma$ is defined in the formulation of Proposition \ref{assertion_pointwise}.
\end{theorem}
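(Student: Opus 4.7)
The plan is essentially to observe that Proposition \ref{assertion_norm} is a direct supremum of the pointwise bound already established in Proposition \ref{assertion_pointwise}. The condition (\ref{cond}) on $A$ and the bound on $\varepsilon$ required in Proposition \ref{assertion_norm} coincide exactly with the hypotheses of Proposition \ref{assertion_pointwise}, so I can apply the latter verbatim to obtain a partition $\langle N\rangle = \Omega_1\cup\Omega_2$ such that
\[
\|A(\Omega_k)x\|_q \leq \gamma\|Ax\|_q \qquad \text{for all } x\in\mathbb{R}^n,\ k=1,2,
\]
with $\gamma$ as defined there.

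The next step is to pass from this pointwise inequality to the operator-norm inequality. Since the bound holds for every $x\in\mathbb{R}^n$, it holds in particular for every $x$ in the closed unit ball $B_X = \{x\in\mathbb{R}^n : \|x\|_X\leq 1\}$ of the given $n$-dimensional norm space $X$. Taking the supremum over $x\in B_X$ on both sides and using the definition of the $(X,q)$-norm,
\[
\|A(\Omega_k)\|_{(X,q)} = \sup_{\|x\|_X\leq 1}\|A(\Omega_k)x\|_q \leq \sup_{\|x\|_X\leq 1}\gamma\|Ax\|_q = \gamma\|A\|_{(X,q)},
\]
which is the desired estimate for $k=1,2$.

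There is no real obstacle here: the pointwise bound in Proposition \ref{assertion_pointwise} is stated uniformly for all $x\in\mathbb{R}^n$ and does not depend on the norm chosen on the domain, so supremizing against any norm $\|\cdot\|_X$ on $\mathbb{R}^n$ yields the operator norm estimate for free. The only point worth emphasizing is that the same partition $(\Omega_1,\Omega_2)$ works simultaneously for both indices $k=1,2$ and for every choice of the domain norm $X$, which is what allows this to be a "simple corollary" as stated in the paper.
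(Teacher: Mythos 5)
Your proposal is correct and matches the paper's intent exactly: the paper states that this proposition ``is a simple corollary of Proposition \ref{assertion_pointwise}'', and your argument --- applying the pointwise estimate from that proposition and taking the supremum over the unit ball of $X$ --- is precisely that corollary spelled out.
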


The following proposition is analogous to Proposition \ref{assertion_norm} for the $(1,q)$--norm. Let $e_j$, $j\in\langle n\rangle$ be the standard basis in $\mathbb{R}^n$.

\begin{theorem}\label{assertion_1_q_norm}
  If for an $N\times n$ matrix $A$ the
  inequality
  \begin{equation}\label{a_i_j_cond}
  |a_j^i|\leq \varepsilon \left\| w_j \right\|_q 
  \end{equation}
  holds for some  $1\leq q<\infty$ and $0<\varepsilon< 1$ and for every $i\in\langle N\rangle$, and $j\in\langle n\rangle$, then there exists a partition (\ref{partition}) such that for $k=1,2$ the following holds:
  
  $
  \text{a) }\left\| A(\Omega_k)\right\|_{(1,q)}\leq
  \left(\frac{1}{2}+\frac{3}{2}\varepsilon^{q/3}\ln^{1/3}{(4n)}\right)^{1/q}
  \left\| A\right\|_{(1,q)},$
  
  $
  \text{b) }\left\| A(\Omega_k)\right\|_{(1,q)}\leq
  \left(\frac{1}{2}+\frac{1}{2}\varepsilon^{q}\sqrt{N}(1+\log(\frac{n}{N}+1)^{1/2}\right)^{1/q}
  \left\| A\right\|_{(1,q)}, $
  
  $
  \text{c) } \left\| A(\Omega_k)\right\|_{(1,q)}\leq\left(\frac{1+n\varepsilon^q}{2}\right)^{1/q}\left\| A\right\|_{(1,q)}.
  $
  \end{theorem}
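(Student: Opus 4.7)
The plan is to reduce the $(1,q)$-norm problem to a columnwise balancing problem and then handle the three parts by three different concentration estimates on the same Rademacher sum. Since the extreme points of the unit ball of $l_1^n$ are $\pm e_j$, one has
\[
\|A\|_{(1,q)}=\max_{j\in\langle n\rangle}\|w_j\|_q, \qquad \|A(\Omega)\|_{(1,q)}=\max_{j\in\langle n\rangle}\|w_j(\Omega)\|_q.
\]
Thus it is enough to exhibit a single partition $\langle N\rangle=\Omega_1\cup\Omega_2$ for which $\sum_{i\in\Omega_k}|a_j^i|^q\leq D_k^q\|w_j\|_q^q$ holds for every $j\in\langle n\rangle$ and $k=1,2$, where $D_k$ is the constant claimed in the corresponding part.

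I would choose $\Omega_1,\Omega_2$ using independent Rademacher signs $\sigma_i\in\{\pm 1\}$, so that for every column $j$,
\[
\sum_{i\in\Omega_k}|a_j^i|^q=\tfrac12\|w_j\|_q^q+\tfrac{(-1)^{k+1}}{2}\,S_j, \qquad S_j:=\sum_{i=1}^N\sigma_i|a_j^i|^q.
\]
Hypothesis (\ref{a_i_j_cond}) supplies both the uniform bound $|a_j^i|^q\leq\varepsilon^q\|w_j\|_q^q$ and the second-moment bound $\sum_i|a_j^i|^{2q}\leq\varepsilon^q\|w_j\|_q^{2q}$. Everything now reduces to choosing $\sigma$ so that $\max_j|S_j|$ matches the appropriate target in each of (a), (b), (c).

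For part (a) I would mimic the cube-root balancing of the proof of Proposition \ref{assertion_pointwise}: Hoeffding's inequality bounds, for each fixed $j$, the number of sign patterns with $|S_j|\geq\beta\|w_j\|_q^q$ by $2\cdot 2^N\exp(-\beta^2/(2\varepsilon^q))$; a union bound over the $2n$ pairs $(j,\pm)$ replaces the $\delta$-net of size $(3/\delta)^n$ used in Proposition~\ref{assertion_pointwise} by the trivial ``net'' $\{\pm e_j\}_{j=1}^n$; the analogous cube-root optimisation between $\beta$ and $\log(4n)$ produces $\beta\asymp\varepsilon^{q/3}\log^{1/3}(4n)$, giving the stated constant. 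Part (b) replaces the crude union bound with a chaining/entropy estimate of $\mathbb E\max_j|S_j|$ in the $\ell_2$-metric that the $n$ vectors $(|a_j^i|^q)_{i=1}^N\in\mathbb R^N$ induce, exploiting the fact that they live in the $N$-dimensional space $\mathbb R^N$; combined with the sub-exponential Bernstein tail supplied by $|a_j^i|^q\leq\varepsilon^q\|w_j\|_q^q$, this should produce the factor $\varepsilon^q\sqrt{N}(1+\log(n/N+1))^{1/2}$, which is sharper than (a) when $N\ll n$. Part (c) is the most elementary: a balanced deterministic partition combined with the pointwise bound $|a_j^i|^q\leq\varepsilon^q\|w_j\|_q^q$ and a pigeonhole/discrepancy argument on the row weights $(|a_j^i|^q/\|w_j\|_q^q)_i$ yields the factor $n\varepsilon^q$.

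The main obstacle will be part (b): combining the $\sqrt{N}$ factor coming from the $N$-dimensional geometry of the column profiles with the $\varepsilon^q$ coming from the sub-exponential regime of Bernstein requires a careful chaining/Dudley argument rather than a naive Hoeffding-plus-union-bound (which would only yield $\varepsilon^{q/2}\sqrt{\log n}$). Part (a) is a direct adaptation of the cube-root balancing in Proposition \ref{assertion_pointwise}, and (c) is a short deterministic calculation.
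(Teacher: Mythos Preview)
Your reduction to columnwise balancing and the treatment of~(a) via Hoeffding plus a union bound over the $n$ basis vectors is essentially the paper's approach (the paper phrases it as a counting argument on subsets, which is equivalent). One quibble: Hoeffding with the variance bound $\sum_i|a_j^i|^{2q}\le\varepsilon^q\|w_j\|_q^{2q}$ actually yields $\beta\asymp\varepsilon^{q/2}\sqrt{\ln(4n)}$, not a cube root; this implies the stated bound in the nontrivial regime $\varepsilon^q\ln(4n)\ll 1$, but there is no genuine ``cube-root optimisation'' here as there was in Proposition~\ref{assertion_pointwise}, since the net is just $\{e_j\}$.

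For~(b) the paper does not do chaining from scratch: it simply quotes a sign-selection result of Gluskin~\cite{3} (his Corollary~5), which states that for any vectors $\tilde w_1,\dots,\tilde w_n\in\mathbb R^N$ there is $\xi\in\{\pm1\}^N$ with $\max_j|(\tilde w_j,\xi)|\le\max_j\|\tilde w_j\|_\infty\cdot\sqrt{N}\bigl(1+\log(\tfrac nN+1)\bigr)^{1/2}$. Applied to $\tilde w_j=(|a_j^i|^q)_{i}$ this gives~(b) in two lines. Your chaining/entropy sketch is the right intuition behind Gluskin's inequality, but re-deriving it with the precise $\sqrt N\,(1+\log(n/N+1))^{1/2}$ factor is more delicate than a Bernstein tail plus Dudley bound, and you have not actually carried it out.

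For~(c) the paper takes a completely different route: it invokes the ham-sandwich theorem. For each row $i$ it places $n$ boxes of volumes $|a_1^i|^q,\dots,|a_n^i|^q$ inside the $i$-th of $N$ well-separated cubes in $\mathbb R^n$; the bisecting hyperplane cuts at most $n$ of the cubes, and assigning those $\le n$ ``cut'' rows arbitrarily costs at most $n\varepsilon^q\|w_j\|_q^q$ per column. Your ``pigeonhole/discrepancy'' description is too vague to count as a proof: pigeonhole alone does not balance $n$ columns simultaneously. A valid alternative in your spirit is the Beck--Fiala linear-algebra argument---take a vertex of the polytope $\{x\in[0,1]^N:\sum_i x_i|a_j^i|^q=\tfrac12\|w_j\|_q^q\ \text{for all }j\}$, which has at most $n$ fractional coordinates, and round them---and this does recover the $n\varepsilon^q$ loss. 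But you should name that mechanism explicitly; ``a short deterministic calculation'' undersells it.
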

  \begin{remark} In Proposition \ref{assertion_1_q_norm} we need sufficiently weak conditions (compared to Proposition \ref{assertion_pointwise}) on the elements of a matrix.
  \end{remark}
  
  \begin{proof} Since 
  the function $\left\| Ax \right\|_q$ is convex, then the $(1, q)$--norm of a matrix is attained on one of the vectors from the standard basis. 
  
  The proof of a) is indeed close to the previous arguments from Proposition \ref{assertion_pointwise}, so here we only show the sketch. Assume that our proposition is not true and for each partition (\ref{partition}) there exists a number $k$ such that  $\left\| A(\Omega_k)\right\|_{(1,q)}>\left(1/2+(3/2)\varepsilon^{q/3}\ln^{1/3}{(4n)}\right)^{1/q}\left\| A\right\|_{(1,q)}$. Denote $\omega'=\Omega_k$. The  $(1,q)$--norm of the matrix $A_{\omega'}$ is attained on some vector  $e_{j_{\omega'}}$, $j_{\omega'}\in\langle n\rangle$, therefore the following holds:
   $ \sum\limits_{i\in \omega'}|a^i_{j_{\omega'}}|^q>\left(1/2+(3/2)\varepsilon^{q/3}\ln^{1/3}{(4n)}\right)\left\|w_{j_{\omega'}}\right\|^q$. 
   Like in the proof of Proposition \ref{assertion_pointwise}, there exists  $j_0\in \langle n\rangle$ such that the set $K=\{\omega' : j_{\omega'}=j_0\}$ is large enough:
  \begin{equation*}
  |K|\geq(2^{N-1}-1)/n> 2^{N-2}/n. \eqno(6)
  \end{equation*}
  It is easy to see that for every $\omega\in K$
  \begin{equation*}
   \sum\limits_{i\in \omega}|a^i_{j_0}|^q>\left(1/2+(3/2)\varepsilon^{q/3}\ln^{1/3}{(4n)}\right)\left\|w_{j_0}\right\|^q. \eqno(7) 
  \end{equation*}
  
  So, for the proof of a) it is enough to check that a number 
   $R$ of subsets
  $\omega\subset\langle N\rangle$ for which (7) holds is less than the right part of (6).
  The value $R$ is estimated as in the proof of Assertion $3$ from \cite{1}.
  \medskip
  
  To prove b) we use Corollary $5$ from \cite{3}.

  Let $\tilde w_j=(|a_j^1|^q,\dots, |a_j^N|^q)$ be a vector which is obtained from the $j$--th column of $A$ by raising the moduli of its coordinates to the power $q$. 
  For all $j\in\langle n\rangle$ $\left\|w_j\right\|_q^q\leq \left\|A\right\|_{(1,q)}$, so (\ref{a_i_j_cond}) implies that $\left\|\tilde w_j\right\|_{\infty}\leq \varepsilon^q\left\|A\right\|_{(1,q)}^q$.  Then due to Corollary from \cite{3} mentioned above there exists such a vector $\xi=(\xi_1,\dots,\xi_N)\in\mathbb R^N$, whose coordinates have modulus $1$ such that for every $j\in\langle n\rangle$ the following inequality holds:
  \begin{equation*}
      \left|( \tilde w_j, \xi )\right|\leq \varepsilon^q\sqrt{N}\left(1+\log\bigl(\frac{n}{N}+1\bigr)\right)^{1/2}\left\|A\right\|_{(1,q)}^q. 
  \end{equation*}
  Let $\Omega_1=\{i\in\langle N\rangle : \xi_i=1\}$, $\Omega_2=\langle N\rangle\backslash \Omega_1=\{i\in\langle N\rangle : \xi_i=-1\}$. Let us check b). Denote $\theta=\sqrt{N}\left(1+\log\bigl(\frac{n}{N}+1\bigr)\right)^{1/2}$.  For $k=1,2$ there exists $j_0^k\in\langle n\rangle$ such that
  \begin{gather*}
  \left\|A(\Omega_k)\right\|_{(1,q)}^q=
  \sum\limits_{i\in\Omega_k}|a_{j_0^k}^i|^q
  \leq
  \frac{1}{2}\left(\sum\limits_{i\in\langle N\rangle}|a_{j_0}^i|^q+\varepsilon^q\theta\left\|A\right\|_{(1,q)}^q\right)
  \leq \left(\frac{1}{2}+\frac{1}{2}\varepsilon^{q}\theta\right)\left\|A\right\|_{(1,q)}^q,
  \end{gather*}
  as required.
  
  To prove c) we apply the following theorem.
  
  \begin{thh}[\cite{4}, p. 287] Let $A_1,\dots, A_n$ be sets in $\mathbb{R}^n$ with finite Lebesgue measure, then there exists a ~hyperplane  $\pi$ which divides the measure of each of them in half. 
  \end{thh}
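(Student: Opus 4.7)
The plan is to deduce this classical ham-sandwich statement from the Borsuk--Ulam theorem, which asserts that every continuous odd map $g:S^n\to\mathbb R^n$ vanishes at some point. Let $\mu$ denote Lebesgue measure, and assume without loss of generality that $\mu(A_i)>0$ for every $i$, since sets of measure zero are automatically bisected by any hyperplane.

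The first step is to parametrize oriented closed halfspaces of $\mathbb R^n$ by the sphere $S^n\subset\mathbb R^{n+1}$: to $u=(u_0,u_1,\dots,u_n)\in S^n$ assign
$$
H_u=\{x\in\mathbb R^n : u_1x_1+\dots+u_nx_n\leq u_0\},
$$
with bounding hyperplane $\pi_u=\{x : u_1x_1+\dots+u_nx_n=u_0\}$. Whenever $(u_1,\dots,u_n)\neq 0$, the set $\pi_u$ has measure zero and $H_{-u}$ is the closure of the complement of $H_u$, so $\mu(A_i\cap H_u)+\mu(A_i\cap H_{-u})=\mu(A_i)$ for every $i$.

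Next I would define $g:S^n\to\mathbb R^n$ coordinatewise by $g_i(u)=\mu(A_i\cap H_u)-\mu(A_i)/2$. Finiteness of $\mu(A_i)$ together with dominated convergence (noting that $u\mapsto\mathbf 1_{H_u}(x)$ is continuous for almost every $x$) makes each $g_i$ continuous, and the balance identity above yields $g_i(-u)=-g_i(u)$. Borsuk--Ulam therefore produces $u^*\in S^n$ with $g(u^*)=0$, and the equations $g_i(u^*)=0$ assert precisely that $\pi_{u^*}$ bisects every $A_i$.

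The main obstacle is the degenerate locus $u_1=\dots=u_n=0$, where $\pi_u$ is empty and $H_u$ is either $\mathbb R^n$ or $\emptyset$; at such points $g_i(u)=\pm\mu(A_i)/2$, which by our positivity reduction is nonzero, so the zero produced by Borsuk--Ulam necessarily lies outside this locus and therefore defines an honest hyperplane. The only other non-routine analytic point is the continuity of $g$, which rests entirely on the finite-measure hypothesis.
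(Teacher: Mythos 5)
The paper does not prove this statement at all: it is imported as a known classical theorem (the ham--sandwich theorem) with a citation to \cite{4}, where it is obtained exactly as you do, as an application of the Borsuk--Ulam theorem. Your sketch is the standard and correct derivation: the parametrization of halfspaces by $S^n$, the oddness of $g$, continuity via dominated convergence (which indeed uses only $\mu(A_i)<\infty$, including at the poles $u=(\pm1,0,\dots,0)$, where $H_u$ swallows or misses all of a bounded core of $A_i$), and the exclusion of the degenerate locus are all handled properly. The only point worth a word more is the reduction ``WLOG $\mu(A_i)>0$'': if some $A_i$ are null you are left with fewer than $n$ sets, so either pad with positive-measure dummies or simply keep the null sets in the list (their coordinates of $g$ vanish identically, and a single positive-measure set already forces the Borsuk--Ulam zero off the degenerate locus; if all sets are null, any hyperplane works). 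This is routine and does not affect correctness.
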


  Let $M=\max\limits_{i,j}\lbrace|a_j^i|^q\rbrace+1$.
  One can put in $\mathbb{R}^n$ $N$ cubes with sides equal to $M$ and parallel to the axes such that every hyperplane intersects at most $n$ of them. (It follows from the existence of $N$ points of the general position in $\mathbb{R}^n$ and the continuity of the equation of a plane.) Let us numerate these cubes. For  $i\in\langle N\rangle$ let $u_i$ be the vertex of  $i$--th cube with the smallest coordinates.
 For each entry $a_j^i$ of the matrix we define a parallelepiped $\widetilde{P_j^i}=  [0,1]^{j-1}\times [1, 1+|a_j^i|^q]\times [0,1]^{n- j}$. We put $n$ rectangular parallelepipeds defined by the entries of the row $v_i$ ($P_j^i=u_i+ \widetilde{P_j^i}$) into the cube with number $i$. Note that $\mu(P_j^i)=|a_j^i|^q$. We call the set of  $P_j^i$, $j\in\langle n \rangle$ for a fixed $i$ by an $i$--th $``$angle$"$. 
  
For $j\in\langle n \rangle$ let 
  $A_j = \bigcup\limits_{i\in\langle N\rangle}{P_j^i}$.
Applying the theorem mentioned above to $A_j$, we get a hyperplane  $\pi$ which divides in half the measure of each $A_j$. Let $P_1$ and $P_2$ be halfspaces into which $\pi$  divides $\mathbb{R}^n$. By construction $\pi$ intersects at most $n$ cubes, consequently at most $n$ $``$angles$"$. It is clear now how to obtain a partition (\ref{partition}). We put the indices of the $``$angles$"$ which entirely belong to  $P_1$ (or $P_2$) in $\Omega_1$ (in $\Omega_2$ correspondingly). We put the indices of the $``$angles$"$ which intersect both $P_1$ and $P_2$ in $\Omega_1$. Let $G$ be the set of such indices.
Let us show that for every  $j\in \langle n \rangle$ the $l_q^N$--norm of the column $w_j$ will decrease at least $\left(\frac{1+n\varepsilon^q}{2}\right)^{1/q}$ times under the partition. It will prove our proposition.
Since $\pi$ divides in half the measure of $A_j$, then 
  $$\sum\limits_{i\in \Omega_1 \backslash G } |a_j^i|^q + V_1 = \sum\limits_{i\in \Omega_2} |a_j^i|^q + V_2,$$
 where  $V_k$, $k=1,2$ stands for the volume of $\cup_{i\in\langle G\rangle}P_j^i\cap P_k$. 
 From (\ref{a_i_j_cond}) and due to the fact that $\pi$ intersects at most $n$ $``$angles$"$, we have the following inequality:
  $$V_1+V_2 \leq n\varepsilon^q \sum\limits_{i\in \langle N\rangle}{|a_j^i|^q},$$ 
  so for $k=1,2$
  $$
  \sum\limits_{i\in \Omega_k} |a_j^i|^q \leq \frac{1+n\varepsilon^q}{2}\sum\limits_{i\in \langle N\rangle} |a_j^i|^q.
  $$
  Thus, Proposition \ref{assertion_1_q_norm} is proved.
  \end{proof}
  
 The following proposition shows that there is a case when (\ref{a_i_j_cond}) holds for $\varepsilon<1$ but for every partition one of the submatrices has the same $(1,q)$--norm as the whole matrix.
  
  \begin{theorem}
  For $n=2^{2k-1}$ there exists a $2k\times n$ -- matrix $A$ for which (\ref{a_i_j_cond}) holds for $\varepsilon^q\log_2{2n}\geq 2$, but for every partition of the form (\ref{partition}) the following equality holds:
  \begin{gather*}
  \max\biggl\{\left\|A(\Omega_1)\right\|_{(1, q)}, \left\|A(\Omega_2)\right\|_{(1, q)} \biggr\} = \left\|A\right\|_{(1, q)}.
  \end{gather*}
  \end{theorem}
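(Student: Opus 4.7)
The plan is to exhibit $A$ whose nonzero columns are the characteristic vectors of the $k$-element subsets of $\langle 2k\rangle$; pigeonhole will then force one side of every partition to carry the full support of some column, producing equality of $(1,q)$-norms.

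First I would enumerate the $k$-element subsets $S\subset\langle 2k\rangle$ and, for each such $S$, take a column $w_S\in\{0,1\}^{2k}$ whose $i$-th coordinate is $1$ iff $i\in S$. The identity $\binom{2k}{k}=2\binom{2k-1}{k-1}\leq 2\cdot 2^{2k-2}=2^{2k-1}=n$ shows that these columns fit into a $2k\times n$ matrix, so I would pad with zero columns to reach exactly $n$ columns and call the result $A$. I would then verify (\ref{a_i_j_cond}): each nonzero column satisfies $\left\|w_S\right\|_q=k^{1/q}$ and has entries in $\{0,1\}$, so the required inequality reduces to $\varepsilon^q\geq 1/k$, while zero columns are trivial. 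Since $\log_2(2n)=2k$, the hypothesis $\varepsilon^q\log_2 2n\geq 2$ is precisely $\varepsilon^q\geq 1/k$.

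Next I would compute the norms. Because $\left\|Ay\right\|_q$ is convex in $y$, the $(1,q)$-norm of $A$ and of each of its row-submatrices is attained at a standard basis vector, so $\left\|A\right\|_{(1,q)}=\max_j\left\|w_j\right\|_q=k^{1/q}$. For any partition $\langle 2k\rangle=\Omega_1\cup\Omega_2$, pigeonhole yields an $i\in\{1,2\}$ with $|\Omega_i|\geq k$; picking any $k$-subset $S\subset\Omega_i$ produces a column $w_S$ of $A$ whose support lies entirely inside $\Omega_i$, hence
\[
\left\|A(\Omega_i)\right\|_{(1,q)}\geq\left\|w_S\right\|_q=k^{1/q}=\left\|A\right\|_{(1,q)},
\]
and the opposite inequality is automatic, giving the required equality.

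I do not expect any genuine obstacle: the whole argument reduces to the binomial bound $\binom{2k}{k}\leq 2^{2k-1}$ and the observation that the threshold $\varepsilon^q\log_2 2n\geq 2$ is exactly tight for columns of equal support size $k$. Both coincidences stem from $n=2^{2k-1}$, which is precisely why the statement is sharp in this form.
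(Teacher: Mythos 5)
Your construction is correct and follows essentially the same idea as the paper: the paper takes columns that are normalized indicators of the larger set in each complementary pair of subsets of $\langle 2k\rangle$ (exactly $2^{2k-1}=n$ of them), while you take $0$--$1$ indicators of all $k$-element subsets padded with zero columns, and in both cases any partition leaves the full support of some column inside one part, forcing that submatrix to attain $\left\|A\right\|_{(1,q)}$. The pigeonhole step, the verification that $\varepsilon^q\log_2(2n)\geq 2$ is exactly $\varepsilon^q\geq 1/k$, and the counting bound $\binom{2k}{k}\leq 2^{2k-1}$ are all sound, so no gap.
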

\begin{proof}[Sketch of proof]
For every pair of the subsets $\omega$ and $\langle 2k\rangle\backslash \omega$ of the set $\langle 2k\rangle$ we choose the subset (any) of the largest cardinality. Let us numerate such subsets: $B_1, \ldots, B_{2^{2k-1}}$. 
We construct a matrix $A$ in the following way: if  $i\in B_j$, then 
$a_j^i=\frac{1}{|B_j|^{1/q}}$, otherwise, $a_j^i=0$. It is easy to check that (\ref{a_i_j_cond}) holds for $A$ and that for any partition (\ref{partition}) either $\left\|A(\Omega_1)\right\|_{(1,q)}=\left\|A\right\|_{(1,q)}$  or $\left\|A(\Omega_2)\right\|_{(1,q)}=\left\|A\right\|_{(1,q)}$.
\end{proof}

Let $q=\infty$ and $A$ be an arbitrary matrix. There is no partition that decreases (even a little)  $(X,\infty)$--norms of two submatrices. It is because one can find a row $v_{\sup}$  of the matrix $A$ such that
$
 \left\| A\right\|_{(X,\infty)}=\sup\limits_{\left\| x \right\|_{X}\leq 1}{\langle x, v_{\sup}\rangle}, 
$
and then the norm of the submatrix containing a row $v_{\sup}$, will be equal to the norm of $A$.

\bigskip
\thanks{The work was supported by the Russian Federation Government Grant No. 14.W03.31.0031.}

The paper is submitted to Mathematical Notes.

\end{document}